\documentclass{amsart}
\usepackage[margin=1in]{geometry}
\usepackage{amsmath}
\usepackage{amssymb}
\usepackage{amsthm}
\usepackage{cite}
\usepackage[verbose]{hyperref}
\usepackage{booktabs}
\usepackage{enumitem}
\usepackage{array}

\theoremstyle{plain}
\newtheorem{theorem}{Theorem}[section]
\newtheorem{proposition}[theorem]{Proposition}
\newtheorem{lemma}[theorem]{Lemma}

\theoremstyle{definition}
\newtheorem{definition}[theorem]{Definition}

\theoremstyle{remark}
\newtheorem{remark}[theorem]{Remark}

\newcommand{\RR}{\mathbb{R}}
\newcommand{\CC}{\mathbb{C}}
\newcommand{\HH}{\mathbb{H}}
\newcommand{\FF}{\mathbb{F}}
\newcommand{\OO}{\mathbb{O}}
\newcommand{\Jac}{\mathrm{rad}}
\newcommand{\LL}{\ell\ell}

\title[Near-Trinions]{Near-Trinions: Complete Classification of\\
  Unital Three-Dimensional Real Associative Algebras}

\author{Joel A.\ Shelton}
\address{Department of Mathematics, Tusculum University,
60 Shiloh Road, Greeneville, TN 37745, USA}
\email{jshelton@tusculum.edu}

\subjclass[2020]{16D70, 16G30, 16P10, 16S70}

\keywords{real division algebras; Frobenius theorem;
Wedderburn--Artin theorem; Jacobson radical; classification of algebras;
near-trinions; nilpotent extensions; Peirce decomposition}

\date{Preprint, 2026}

\begin{document}

\begin{abstract}
Frobenius' theorem settles, definitively, the question of three-dimensional real division algebras: there are none. What it leaves open --- and what this paper addresses --- is the question of which three-dimensional real associative algebras actually exist. We call these \emph{near-trinions} and classify them completely, up to $\RR$-algebra isomorphism. The answer is exactly six isomorphism classes; we provide canonical representatives, explicit multiplication tables, and a collection of module-theoretic invariants sufficient to distinguish every pair. The classification proceeds by stratifying on the radical dimension $d$: Wedderburn--Artin resolves the semisimple stratum; in the $d=1$ stratum, the Peirce decomposition locates the radical generator in either a diagonal or off-diagonal corner, producing the commutative and non-commutative branches respectively and ruling out the residue field $\CC$ by a characteristic sign obstruction; and the $d=2$ stratum splits on whether $\Jac(A)^2$ vanishes. The argument not only recovers the six classes but explains why the radical dimension takes values in $\{0,1,2\}$, why the $d=1$ case splits on commutativity, and why no near-trinion with residue field $\CC$ can exist. One direction for subsequent work is proposed.
\end{abstract}

\maketitle

\section{Introduction}

\subsection{The problem and its origin}

The sequence of real normed division algebras --- the real numbers $\RR$,
the complex numbers $\CC$, the quaternions $\HH$, and the octonions $\OO$
--- occupies dimensions one, two, four, and eight, a pattern that admits
a structural explanation.
Each algebra in the sequence arises from its predecessor by the
\emph{Cayley--Dickson construction} \cite{Baez2002}, which doubles the dimension at every
step while sacrificing one algebraic property: $\RR$ is an ordered field;
$\CC$ loses ordering but retains commutativity; $\HH$ loses commutativity
but retains associativity; $\OO$ loses associativity but retains the
weaker alternative law.
What this construction cannot produce is an algebra of dimension three.
The doubling mechanism jumps from two to four, and the gap between them
is not a missing algebra in the sequence but a structural impossibility ---
one that Frobenius made precise.

\begin{theorem}[Frobenius, 1878 {\cite{Frobenius1878}}]\label{thm:frobenius}
  The only finite-dimensional associative real division algebras are
  $\RR$, $\CC$, and $\HH$.
\end{theorem}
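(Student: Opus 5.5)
The plan is to follow the classical argument via minimal polynomials and the trace-zero subspace. Let $D$ be a finite-dimensional associative real division algebra with unit $1$. Identify $\RR$ with $\RR\cdot 1$. Every $x \in D$ is algebraic over $\RR$, since $1, x, x^2, \dots$ are linearly dependent. Its minimal polynomial is irreducible: a factorization $p=fg$ would give $f(x)g(x)=0$ in a division algebra, forcing one factor to vanish and contradicting minimality. So the minimal polynomial has degree $1$ or $2$. If $D=\RR$ we are done. Otherwise, pick $x\notin\RR$. Completing the square gives $x=a+u$ with $a\in\RR$ and $u^2=-b^2<0$. Rescaling, we obtain $i\in D$ with $i^2=-1$, and $\RR[i]\cong\CC$.

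Next I will define $V=\{v\in D : v^2\in\RR_{\le 0}\}$, the set of ``pure imaginary'' elements, and show that it is a real subspace with $D=\RR\oplus V$. This is the step I expect to be the main obstacle, and specifically closure under addition. The decomposition $D = \RR + V$ comes from the completing-the-square step above, and $\RR\cap V=0$ is clear. For closure, take $u,v\in V$ linearly independent together with $1$. I would first show that $1,u,v$ are linearly independent. Then consider $u+v$ and $u-v$: each satisfies a quadratic $w^2 = \alpha w + \beta$. Adding the two relations gives $(u+v)^2+(u-v)^2 = 2u^2+2v^2\in\RR$. Comparing coefficients in the independent set $\{1,u,v\}$ then forces the linear coefficients $\alpha$ to vanish, so $u\pm v\in V$.

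On $V$, define $\langle u,v\rangle = -\tfrac12(uv+vu)$. By the previous step $uv+vu=(u+v)^2-u^2-v^2\in\RR$, so this is a real, symmetric, bilinear form, and it is positive definite since $\langle u,u\rangle=-u^2>0$ for $u\neq0$. If $\dim V=1$, then $D=\RR\oplus\RR i\cong\CC$. Otherwise, take orthonormal $e_1,e_2\in V$, so $e_1^2=e_2^2=-1$ and $e_1e_2=-e_2e_1$. Set $e_3=e_1e_2$; one checks $e_3\in V$ (since $e_3^2=-1$), $e_3\perp e_1,e_2$, and that $1,e_1,e_2,e_3$ span a copy of $\HH$.

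Finally, suppose there were $w\in V$ orthogonal to $e_1,e_2,e_3$. Then $w$ anticommutes with $e_1$ and $e_2$, hence commutes with $e_3=e_1e_2$. Since $w$ also anticommutes with $e_3$, we get $we_3=0$, and therefore $w=0$ because $D$ is a division algebra. Thus $\dim V\le 3$ and $D\cong\HH$. Note also that the case $\dim V=2$ cannot occur, because $e_1e_2$ automatically produces a third orthonormal direction. This is exactly the dimension-three obstruction the paper is built around.
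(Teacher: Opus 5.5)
Your argument is correct apart from one missing line, given below. It is the standard textbook proof: split $D=\RR\oplus V$ into real and pure-imaginary parts, put the positive-definite form $-\tfrac12(uv+vu)$ on $V$, extract an orthonormal quaternion triple, and rule out a fourth orthogonal direction by the anticommutation trick.

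The paper takes a much shorter route because it does not actually prove the theorem. It cites Frobenius and sketches only the nonexistence in dimension three: $D$ contains $\RR[i]\cong\CC$, a second generator normalized to $(j/\lambda)^2=-1$ gives independent $1,i,j/\lambda$, and $ij/\lambda$ is asserted to be a fourth independent element.

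Your route proves the full statement: it excludes every dimension other than $1,2,4$ and identifies the four-dimensional algebra with $\HH$. It also makes the independence of the fourth element automatic, since $e_3=e_1e_2$ is orthogonal to $e_1,e_2$ by construction. In the paper's sketch that independence is left unargued. It does hold, because a relation $(\gamma+\delta i)j\in\RR[i]$ with $\gamma+\delta i\neq 0$ would put $j$ in $\RR[i]$. In addition, an arbitrary extra generator squares to a negative real only after your completing-the-square normalization.

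What the paper's sketch gains is brevity. Its real content is the parity fact that a division algebra containing $\CC$ is a left $\CC$-vector space, hence of even real dimension. Together with your first step, that is all the paper's semisimple stratum uses.

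The missing line is in your closure step. Comparing coefficients gives only $(u+v)^2=\beta\in\RR$, whereas membership in $V$ requires $\beta\le 0$. If $\beta>0$, then $(u+v-\sqrt\beta)(u+v+\sqrt\beta)=0$ forces $u+v=\pm\sqrt\beta\in\RR$, contradicting the independence of $1,u,v$.

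When you write out the deferred independence of $1,u,v$, make sure it does not use closure of $V$, since that is what you are proving. Suppose $v=s+tu$ with $s,t\in\RR$. Squaring gives $2st\,u\in\RR$, so $st=0$ because $u\notin\RR$. Then either $v=tu$ or $v=s\in\RR\cap V=0$, and both contradict the independence of $u,v$. A relation not involving $v$ is excluded by $u\notin\RR$.
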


A three-dimensional real division algebra --- an object one might
naturally call a \emph{trinion}, by analogy with the Latin sequence
\emph{singuli, bini, trini, quaterni} --- does not exist.
The proof is short: any such candidate must contain $\CC$ as a
subalgebra, after which any additional generator $j$ must satisfy
$j^2 = -\lambda^2$ for some $\lambda \in \RR^*$, making
$(j/\lambda)^2 = -1$ and producing a second copy of $\CC$ independent
of the first.
The three elements $1$, $i$, and $j/\lambda$ are then linearly
independent over $\RR$ and their product $ij/\lambda$ contributes a
fourth, driving the dimension to at least four and leaving no room for
a trinion.

A self-contained presentation of this argument appears in \cite{Shelton2024};
the present paper is, in a sense, the natural sequel.

The question is simply this: Frobenius' theorem tells us that no
three-dimensional \emph{division} algebra over $\RR$ exists, but it says
nothing about three-dimensional associative algebras in general.
The class of all such algebras --- the \emph{near-trinions} --- has exactly
six members, as Theorem~\ref{thm:main} establishes.

\subsection{What is and is not in the literature}

The classification problem for low-dimensional associative algebras has
since accumulated a substantial literature.
The three-dimensional case over $\RR$ was first addressed by Scheffers
\cite{Scheffers1891} in 1891.
The modern treatment --- complete, rigorous, and explicitly over both $\RR$
and $\CC$ --- appears in Kobayashi, Shirayanagi, Takahasi, and Tsukada
\cite{Kobayashi2021}, whose six isomorphism classes agree with
Theorem~\ref{thm:main}; the correspondence is made explicit in
Proposition~\ref{prop:kst}.

Given this prior work, the contributions of the present paper are
the following.

\emph{Framework.}
The near-trinion framing gives the classification a narrative coherence
and a connection to the long history of the problem that the earlier
treatments, which approach the question as a routine enumeration, do not provide.

\emph{Proof method.}
The argument in \cite{Kobayashi2021} is coordinate-based: structure
constants are varied until all isomorphism classes are enumerated.
It produces the correct list but does not explain \emph{why} the list
has the structure it does.
The present argument proceeds through the Jacobson radical stratification
and Peirce decomposition (Sections~\ref{sec:prelim} and
\ref{sec:classification}).
This approach answers a different question: not only \emph{which} algebras
exist, but \emph{why} the radical dimension $d$ takes values in $\{0,1,2\}$,
why the $d=1$ case splits on commutativity, and why no near-trinion with
residue field $\CC$ can exist.
The last point merits proof here, since it drives the entire $d=1$ analysis. If $A/ \Jac(A)\cong\CC$ and $\Jac(A)=\RR\eta$ for some nonzero $\eta\in\Jac(A)$, then $i\cdot\eta=c\eta$ for some $c\in\RR$,
so applying $i$ again gives $i^2\cdot\eta = c^2\eta$. Since $i^2=-1 \in \CC$, we obtain
$(-1)\eta = c^2\eta$, hence $c^2=-1$, impossible over $\RR$.
The $d=1$ stratum therefore splits into exactly two near-trinions, determined
entirely by whether the radical generator occupies a diagonal or off-diagonal
Peirce corner subalgebra. 

\subsection{Road map}

The proof of Theorem~\ref{thm:main} is organized as follows.
Section~\ref{sec:prelim} establishes the three tools used throughout:
the Jacobson radical (three equivalent characterizations and their
computations for each near-trinion), the Wedderburn--Artin theorem,
and Nakayama's Lemma.
Section~\ref{sec:classification} carries out the classification in three
strata determined by $d = \dim_\RR \Jac(A) \in \{0,1,2\}$.
In the $d = 0$ stratum, Wedderburn--Artin identifies $\RR^3$ and
$\RR\times\CC$ as the only semisimple candidates.
In the $d = 1$ stratum, the Peirce decomposition shows that the radical
generator lies either in a diagonal corner (producing the commutative
near-trinion NT3) or an off-diagonal corner (producing the non-commutative
near-trinion NT6); the subcase $A/\Jac(A) \cong \CC$ is shown to be
impossible.
In the $d = 2$ stratum, whether $\Jac(A)^2$ vanishes distinguishes
NT4 from NT5.
Section~\ref{sec:invariants} establishes irredundancy: the six algebras
are distinguished by $d$, nilpotency of $\Jac(A)^2$, commutativity,
and primitive idempotent count; specifically, $d$ alone separates the
three strata; commutativity separates NT3 from NT6; and Loewy length
separates NT4 from NT5. One direction for subsequent work is described in
Section~\ref{sec:future}.

\subsection{Conventions}

Throughout, \emph{algebra} means a finite-dimensional, associative,
unital $\RR$-algebra, unless stated otherwise.
The Jacobson radical of an algebra $A$ is denoted $\Jac(A)$, and we write
$\LL(A)$ for the Loewy length, the least $n$ such that $\Jac(A)^n = 0$.
The phrase \emph{near-trinion} refers specifically to
any three-dimensional associative unital $\RR$-algebra.
For a quotient ring $R/I$, an overbar denotes the coset of an element:
$\bar{r} = r + I \in R/I$; thus $\bar\varepsilon \in \RR[\varepsilon]/(\varepsilon^2)$,
$\bar x \in \RR[x]/(x^3)$, and so on.
Peirce corners are written $e_i A e_k = \{e_i a e_k : a \in A\}$
for idempotents $e_i, e_k$. 

\bigskip

\begin{theorem}[Near-Trinion Classification]\label{thm:main}
  Up to $\RR$-algebra isomorphism, every near-trinion is isomorphic to
  exactly one of:
  \begin{enumerate}[label=\textup{(NT\arabic*)}]
    \item $\RR^3 = \RR \times \RR \times \RR$
    \item $\RR \times \CC$
    \item $\RR \times \RR[\varepsilon]/(\varepsilon^2)$
    \item $\RR[\varepsilon_1, \varepsilon_2]/
          (\varepsilon_1^2,\, \varepsilon_2^2,\, \varepsilon_1\varepsilon_2)$
    \item $\RR[x]/(x^3)$
    \item $T_2(\RR)$, the algebra of upper triangular $2\times 2$ real matrices
  \end{enumerate}
  This list is irredundant: no two of the above are isomorphic.
  The six algebras are distinguished by the invariants in
  Section~\ref{sec:invariants}: radical dimension $d$ separates the
  three strata $\{$NT1,\,NT2$\}$, $\{$NT3,\,NT6$\}$, $\{$NT4,\,NT5$\}$;
  commutativity separates NT3 from NT6; and Loewy length separates
  NT4 from NT5.
\end{theorem}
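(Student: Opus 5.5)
We stratify on $d=\dim_\RR \Jac(A)$, using Wedderburn--Artin for $A/\Jac(A)$ together with the Wedderburn principal theorem: over $\RR$ the quotient is separable, so $A = S\oplus \Jac(A)$ with $S\cong A/\Jac(A)$ a subalgebra. Since $A$ is unital, $d\le 2$, because $A/\Jac(A)\neq 0$. Every semisimple real algebra is a product of matrix algebras over $\RR,\CC,\HH$, and those of dimension $3-d$ are easy to list. For $d=0$ we get $\RR^3$ or $\RR\times\CC$; the candidates $M_2$ and $\HH$ have dimension $\ge 4$. For $d=2$ the quotient is $\RR$. For $d=1$ the quotient is $\RR^2$ or $\CC$.

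\textbf{Case $d=1$.} First rule out $\CC$. With $S\cong\CC$ and $\Jac(A)=\RR\eta$, left multiplication by $i\in S$ preserves the line $\RR\eta$, so $i\eta=c\eta$. Then $-\eta=i^2\eta=c^2\eta$, which is impossible, exactly as in the introduction. So $S=\RR e_1\oplus\RR e_2$ with orthogonal idempotents $e_1+e_2=1$. Peirce-decompose $\eta=\sum e_i\eta e_k$. Each corner $e_i\Jac(A) e_k$ is a subspace of the one-dimensional radical, so exactly one corner is nonzero and $\eta$ lies in it.

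If $\eta\in e_1Ae_1$, then $e_1Ae_1=\RR e_1\oplus\RR\eta$ is local of dimension $2$ with $\eta^2\in\Jac(A)\cap e_1Ae_1$. Nilpotency forces $\eta^2=0$, and we obtain $\RR[\varepsilon]/(\varepsilon^2)\times\RR$, which is NT3. If $\eta\in e_1Ae_2$, then $\eta^2=e_1\eta e_2e_1\eta e_2=0$. The map $e_1\mapsto E_{11}$, $e_2\mapsto E_{22}$, $\eta\mapsto E_{12}$ is then an isomorphism onto $T_2(\RR)$, which is NT6. The case $e_2Ae_1$ is symmetric and gives $T_2(\RR)$ again, via transpose conjugation or relabelling.

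\textbf{Case $d=2$.} Here $A=\RR 1\oplus N$ with $N$ a nilpotent ideal of dimension $2$. By Nakayama, $N^2\subsetneq N$, so $\dim N^2\in\{0,1\}$. If $N^2=0$, then any basis of $N$ gives NT4. If $\dim N^2=1$, pick $x\in N\setminus N^2$. Then $N=\RR x+N^2$, and by induction $N^k=\RR x^k+N^{k+1}$, so $N^2=\RR x^2+N^3$. Applying Nakayama to $N^2$ gives $N^3\subsetneq N^2$, hence $N^3=0$ and $N^2=\RR x^2$ with $x^2\neq 0$. Then $1,x,x^2$ is a basis with $x^3=0$, and $A\cong\RR[x]/(x^3)$, which is NT5. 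Commutativity is automatic here because $A$ is generated by $x$.

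\textbf{Irredundancy and the main obstacle.} Irredundancy follows from isomorphism invariants. The strata are separated by $d$, since $\Jac$ of each listed algebra is computed directly. Commutativity separates NT3 from NT6. $\Jac^2$, equivalently Loewy length $2$ versus $3$, separates NT4 from NT5. Finally $\RR^3\not\cong\RR\times\CC$ because the first has $x^2=-1$ unsolvable and the second does not, or alternatively by counting idempotents. The step we expect to need the most care is the $d=1$ Peirce argument. There we must justify the splitting $A=S\oplus\Jac(A)$, either via Wedderburn--Malcev or directly by lifting idempotents from $A/\Jac(A)\cong\RR^2$. We must also check that $\Jac(A)$ is a two-sided $S$-bimodule, so that its Peirce corners make sense and the single nonzero corner determines the multiplication completely.
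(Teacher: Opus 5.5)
Your proposal follows the paper's route: stratification by $d$, Wedderburn--Artin for $d=0$, the $c^2=-1$ obstruction to $A/\Jac(A)\cong\CC$, the diagonal versus off-diagonal Peirce corner for $\RR^2$, the split on $\Jac(A)^2$ for $d=2$, and the same separating invariants; it also supplies justifications the paper leaves implicit, namely the Wedderburn--Malcev splitting that produces $e_1,e_2$, the fact that exactly one Peirce corner of the one-dimensional radical is nonzero, and the Nakayama argument giving $\Jac(A)^2=\RR x^2$ and $\Jac(A)^3=0$. One correction: $x^2=-1$ is also unsolvable in $\RR\times\CC$, since the $\RR$-coordinate would need $a^2=-1$, so separate NT1 from NT2 by the idempotent count as the paper does, or by the equation $x^2=-e$ for a nonzero idempotent $e$, which has a solution in $\RR\times\CC$ but none in $\RR^3$.
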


The proof occupies Sections~\ref{sec:prelim} and~\ref{sec:classification};
the invariants that establish irredundancy are computed in
Section~\ref{sec:invariants}.

\begin{center}
\renewcommand{\arraystretch}{1.25}
\begin{tabular}{clll}
  \toprule
  Stratum & Algebras & Separating invariant(s) \\
  \midrule
  $d=0$ & NT1 vs.\ NT2 & idempotent count (3 vs.\ 2) \\
  $d=1$ & NT3 vs.\ NT6 & commutativity (Yes vs.\ No) \\
  $d=2$ & NT4 vs.\ NT5 & $\Jac(A)^2=0$? (Yes vs.\ No); $\LL(A)$ (2 vs.\ 3) \\
  \bottomrule
\end{tabular}
\end{center}

\section{Preliminaries}\label{sec:prelim}

We record here the ring-theoretic tools on which the classification rests.
The reader familiar with the Wedderburn--Artin theorem and the Jacobson
radical may proceed directly to Section~\ref{sec:classification}.

\subsection{The Jacobson radical: three characterizations}

For finite-dimensional algebras over a field, the Jacobson radical admits
three descriptions that are, in general, distinct objects; the content of
the following proposition is that they coincide.

\begin{definition}\label{def:jac}
  Let $A$ be a finite-dimensional $\RR$-algebra.
  Define the following subsets of $A$:
  \begin{enumerate}[label=\textup{(\alph*)}]
    \item $J_1(A) = \bigcap \{\mathfrak{m} : \mathfrak{m}
          \text{ a maximal left ideal of } A\}$;
    \item $J_2(A) = $ the sum of all nilpotent left ideals of $A$;
    \item $J_3(A) = \{a \in A : 1 - ba \text{ is invertible for all }
          b \in A\}$.
  \end{enumerate}
\end{definition}

\begin{proposition}[Equivalence of characterizations {\cite{Pierce1982}}]
  \label{prop:jaceq}
  For any finite-dimensional $\RR$-algebra $A$,
  $J_1(A) = J_2(A) = J_3(A)$.
  This common object is the \textbf{Jacobson radical} $\Jac(A)$,
  and it is the unique largest nilpotent two-sided ideal of $A$.
\end{proposition}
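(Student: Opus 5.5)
The plan is to prove the equality through a cycle of inclusions, $J_2 \subseteq J_3 \subseteq J_1 \subseteq J_2$, using finite-dimensionality only in the last step. Two-sidedness and maximality then come out at the end.

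For $J_2 \subseteq J_3$, first show that a sum of two nilpotent left ideals is nilpotent. If $L^m = 0$ and $M^n = 0$, expand a product of $m+n-1$ elements of $L+M$. Using $AL \subseteq L$ and $AM \subseteq M$, each word with at least $m$ factors from $L$ lies in $L^m = 0$, since everything to the left of a factor from $L$ is absorbed into it; the same holds for words with at least $n$ factors from $M$. By finite-dimensionality, $J_2$ is therefore itself a nilpotent left ideal. Now take $a \in J_2$ and $b \in A$. Then $ba \in J_2$ is nilpotent, so $1 - ba$ is invertible with inverse $\sum_k (ba)^k$.

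For $J_3 \subseteq J_1$, suppose $a \in J_3$ but $a \notin \mathfrak{m}$ for some maximal left ideal $\mathfrak{m}$. Then $\mathfrak{m} + Aa = A$, so $1 = m + ba$ for some $m \in \mathfrak{m}$ and $b \in A$. This gives $m = 1 - ba$, which is invertible, so $\mathfrak{m} = A$, a contradiction.

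For $J_1 \subseteq J_2$, which is the main obstacle, write $J = J_1$. It is a left ideal, and in fact a two-sided ideal, because it equals the intersection of the annihilators of the simple left modules $A/\mathfrak{m}$, and annihilators are two-sided. I will show that $J$ is nilpotent, which places it among the nilpotent left ideals, so $J \subseteq J_2$.

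The chain $J \supseteq J^2 \supseteq \cdots$ consists of subspaces of a finite-dimensional space, so it stabilizes at some $N = J^n$ with $JN = N$. If $N \neq 0$, Nakayama's Lemma, which the paper records, forces $N = 0$. To keep the argument self-contained, I would first show that every $j \in J$ has $1 - j$ left invertible: otherwise $A(1-j)$ lies in some maximal left ideal, which also contains $j$ and hence $1$. Then I would take a nonzero finitely generated $N$ with $JN = N$, choose a minimal generating set, write $n_1 = \sum_i j_i n_i$, and use left invertibility of $1 - j_1$ to drop $n_1$ from the generators, a contradiction. Hence $J$ is nilpotent, and the three sets coincide.

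Finally, the common object is a two-sided nilpotent ideal. Any nilpotent two-sided ideal is in particular a nilpotent left ideal, so it lies in $J_2$. This makes $\Jac(A)$ the unique largest nilpotent two-sided ideal.
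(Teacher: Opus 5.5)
Your proof is correct. There is no in-paper argument to compare it with: the paper states this proposition with a bare citation to Pierce and gives no proof, so you are supplying one. What you give is the standard textbook argument, made self-contained through the cycle $J_2\subseteq J_3\subseteq J_1\subseteq J_2$. Finite-dimensionality enters exactly where it must: to make $J_2$ a finite, hence nilpotent, sum; to place proper left ideals inside maximal ones; and to stabilize the chain $J\supseteq J^2\supseteq\cdots$. Nothing in the argument uses that the ground field is $\RR$. Proving the Nakayama step directly for $J=J_1$ is the right call and not merely a matter of style. The paper's Nakayama theorem is phrased for $\Jac(A)$, which is only defined once this proposition is established, so quoting it here would be circular. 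Your minimal-generating-set argument proves exactly the version you need.

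Two points deserve an explicit line each. First, the equality $J_1=\bigcap_{\mathfrak m}\mathrm{Ann}(A/\mathfrak m)$ rests on a fact you don't state: for $x\notin\mathfrak m$, the left ideal $\{y: yx\in\mathfrak m\}$ is maximal, because it is the kernel of the surjection $y\mapsto y\bar x$ of $A$ onto the simple module $A/\mathfrak m$. You genuinely need the resulting two-sidedness, since rewriting $n_1\in JN$ as $\sum_i j_i n_i$ requires $JA\subseteq J$ to absorb the coefficients. Second, your step $J_3\subseteq J_1$ uses the particular $b$ produced by $\mathfrak m+Aa=A$, so the quantifier over all $b$ in the definition of $J_3$ is essential. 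The paper's later remark that $a\in\Jac(A)$ iff $1-a$ is invertible drops that quantifier and is false as stated: in $\RR^3$, $a=(2,0,0)$ has $1-a$ invertible, yet $\Jac(\RR^3)=0$.
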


\begin{remark}
  The three characterizations illuminate different aspects of the radical.
  $J_1$ is computed directly in Section~\ref{sec:maxideals}.
  $J_2$ is used in the classification: the radical is nilpotent and
  bounded in degree.
  $J_3$ provides a practical test: $a \in \Jac(A)$ if and only if
  $1 - a$ is invertible.
\end{remark}

Recall that the Loewy length $\LL(A)$ is defined in Section~1.4
(Conventions); for $\RR[x]/(x^3)$ it takes the value $3$, the maximum
possible in dimension three.

\subsection{The Wedderburn--Artin theorem}

\begin{theorem}[Wedderburn--Artin {\cite{Wedderburn1908,Pierce1982}}]
  \label{thm:wa}
  Every semisimple finite-dimensional algebra over a field $k$ is isomorphic
  to a finite direct product of full matrix rings:
  \[
    A \;\cong\; M_{n_1}(D_1) \times \cdots \times M_{n_r}(D_r),
  \]
  where each $D_i$ is a finite-dimensional division algebra over $k$.
  When $k = \RR$, Frobenius' theorem restricts each $D_i$ to
  $\RR$, $\CC$, or $\HH$.
\end{theorem}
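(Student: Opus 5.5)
We take \emph{semisimple} to mean $\Jac(A)=0$, and we prove Theorem~\ref{thm:wa} by the standard module-theoretic route. The idea is to realize $A$ as the endomorphism ring of its own regular module. First we show that ${}_AA$ is a direct sum of minimal left ideals. Next we group these into isotypic blocks, and then we identify the endomorphism ring of each block as a matrix ring over a division algebra via Schur's lemma. The final sentence of the theorem is then an immediate appeal to Theorem~\ref{thm:frobenius}.

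\textbf{Step 1: every left ideal is a direct summand.} Let $L \subseteq A$ be a nonzero left ideal. By finite-dimensionality we may choose a minimal nonzero left ideal $L_0 \subseteq L$. Since $\Jac(A)=0$, Proposition~\ref{prop:jaceq} (via $J_2$) says $L_0$ is not nilpotent, so $L_0^2 \neq 0$. Choose $x \in L_0$ with $L_0x \neq 0$. Then $L_0x = L_0$ by minimality. Hence the annihilator $\{a\in L_0 : ax=0\}$ is a proper left ideal inside $L_0$, and so it is zero. The map $a\mapsto ax$ is therefore a bijection $L_0\to L_0$, and we obtain $e\in L_0$ with $ex=x$. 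A short argument then shows $e^2=e$ and $L_0=Ae$. Indeed, $(e^2-e)x=0$ forces $e^2=e$. So $L_0=Ae$ with complement $A(1-e)$.

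Inducting on dimension, and intersecting $L$ with the complements, shows that every left ideal is generated by an idempotent and is a direct summand. In particular we get ${}_AA = L_1\oplus\cdots\oplus L_m$ with each $L_i$ a simple left module.

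\textbf{Step 2: blocks and Schur.} Group the $L_i$ by isomorphism type: ${}_AA \cong S_1^{n_1}\oplus\cdots\oplus S_r^{n_r}$, with the $S_j$ pairwise non-isomorphic simple modules. By Schur's lemma, $D_j' := \operatorname{End}_A(S_j)$ is a division algebra. It is finite-dimensional over $k$ because $S_j$ is. Schur's lemma also gives $\operatorname{Hom}_A(S_j,S_l)=0$ for $j\neq l$, so
\[
  \operatorname{End}_A({}_AA)\;\cong\;\prod_{j=1}^r M_{n_j}(D_j').
\]
On the other hand, right multiplication gives $\operatorname{End}_A({}_AA)\cong A^{\mathrm{op}}$. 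Taking opposites, and using $M_n(D)^{\mathrm{op}}\cong M_n(D^{\mathrm{op}})$ via transpose, we obtain $A\cong\prod_j M_{n_j}(D_j)$ with $D_j = (D_j')^{\mathrm{op}}$, again a finite-dimensional division algebra over $k$.

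\textbf{Step 3: the case $k=\RR$.} Each $D_j$ is a finite-dimensional associative real division algebra, so Theorem~\ref{thm:frobenius} forces $D_j\in\{\RR,\CC,\HH\}$. Since $\CC^{\mathrm{op}}\cong\CC$ and $\HH^{\mathrm{op}}\cong\HH$ (via conjugation), passing to opposites does not leave this list.

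\textbf{Expected obstacle.} The main obstacle is Step~1. This is where the hypothesis $\Jac(A)=0$ must actually be converted into complete reducibility, and it requires producing an idempotent generator of a minimal left ideal from its non-nilpotence. If that argument becomes delicate, the first fallback is to cite Proposition~\ref{prop:jaceq} in its $J_1$ form. In that form $A$ embeds in $\prod A/\mathfrak m$ over finitely many maximal left ideals, whose intersection is zero by finite-dimensionality, so ${}_AA$ is a submodule of a semisimple module and hence semisimple. The remaining steps are routine bookkeeping with Schur's lemma.
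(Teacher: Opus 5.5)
Your proof is correct. It also supplies something the paper does not: the paper gives no argument for the structure theorem itself but cites Wedderburn and Pierce. The only reasoning the paper adds is the reduction of each $D_i$ to $\RR$, $\CC$, or $\HH$ via Theorem~\ref{thm:frobenius}, which is exactly your Step~3.

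Your Steps~1--2 are the standard module-theoretic proof:
\begin{itemize}
\item a non-nilpotent minimal left ideal yields an idempotent generator, so ${}_AA$ splits into simple modules;
\item Schur's lemma converts the isotypic decomposition into $\prod_j M_{n_j}(D_j')$;
\item $\operatorname{End}_A({}_AA)\cong A^{\mathrm{op}}$, together with the transpose, removes the opposite.
\end{itemize}
Your route buys self-containedness, and it shows which characterization of the radical actually turns $\Jac(A)=0$ into complete reducibility, namely the nilpotent-left-ideal form $J_2$. The paper's citation buys brevity, which is reasonable for a textbook result used only in the $d=0$ stratum.

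Three small remarks.
\begin{enumerate}
\item Proposition~\ref{prop:jaceq} is stated in the paper only over $\RR$, whereas the theorem is over any field $k$. The inclusion you need holds in any unital ring, so nothing is lost. If a nilpotent left ideal $N$ were not contained in a maximal left ideal $\mathfrak m$, then $1=n+m$ with $n\in N$ and $m\in\mathfrak m$, and $m=1-n$ would be a unit lying in $\mathfrak m$.
\item In Step~1, only the decomposition ${}_AA=L_1\oplus\cdots\oplus L_m$ is used later. It follows by iterating your construction inside $A(1-e_1)$, then inside $A(1-e_1)\cap A(1-e_2)$, and so on, using the modular law. The stronger claim that every left ideal is generated by an idempotent needs one more step: combine $e$ with an idempotent $f'$ generating $L\cap A(1-e)$ into $e+f'-ef'$, using $f'e=0$. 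Since you never use that claim, you can simply drop it.
\item The remark about $\CC^{\mathrm{op}}$ and $\HH^{\mathrm{op}}$ is unnecessary. Each $D_j$ is itself a finite-dimensional real division algebra, so Frobenius applies to it directly.
\end{enumerate}
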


\subsection{Comparison with Kobayashi--Shirayanagi--Takahasi--Tsukada}
\label{sec:kst}

\begin{proposition}[Correspondence with {\cite{Kobayashi2021}}]
  \label{prop:kst}
  The six near-trinions correspond to the unital entries in
  \cite{Kobayashi2021} as given in Table~\ref{tab:kst}; each
  near-trinion is matched to a unique entry by its invariant signature.
\end{proposition}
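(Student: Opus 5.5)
The correspondence can be established by an invariant-matching argument. First, from \cite{Kobayashi2021} I will extract the unital three-dimensional real associative algebras. Their list also contains non-unital algebras, which are discarded by checking for a two-sided identity in each multiplication table. I expect exactly six unital entries to survive, consistent with the count in Theorem~\ref{thm:main}. For each surviving entry I will compute the same invariant signature used in Section~\ref{sec:invariants}: the radical dimension $d=\dim_\RR\Jac(A)$, commutativity, whether $\Jac(A)^2=0$ (equivalently the Loewy length $\LL(A)$), and the number of primitive orthogonal idempotents in a complete set.

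The radical of each entry is computed from its structure constants. By Proposition~\ref{prop:jaceq}, $\Jac(A)$ is the largest nilpotent two-sided ideal, so I will take the span of those basis elements, or combinations of them, that generate a nilpotent ideal. I will then confirm maximality by checking that the quotient is semisimple, that is, a product of copies of $\RR$ and $\CC$ as allowed by Theorem~\ref{thm:wa}. Commutativity and the vanishing of $\Jac(A)^2$ can be read directly from the tables. Idempotent counts are needed only in the semisimple stratum, where they separate $\RR^3$ (three primitive idempotents) from $\RR\times\CC$ (two, since $\CC$ contains no nontrivial idempotent).

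Next I will compute the same signature for NT1--NT6 directly from their canonical forms:
\begin{itemize}
\item $\RR^3$: $(0,\text{comm},\text{--},3)$;
\item $\RR\times\CC$: $(0,\text{comm},\text{--},2)$;
\item $\RR\times\RR[\varepsilon]/(\varepsilon^2)$: $(1,\text{comm})$;
\item $T_2(\RR)$: $(1,\text{noncomm})$, with $\Jac$ the strictly upper triangular matrices;
\item $\RR[\varepsilon_1,\varepsilon_2]/(\ldots)$: $(2,\Jac^2=0)$;
\item $\RR[x]/(x^3)$: $(2,\Jac^2\neq0)$.
\end{itemize}
The table in the introduction already shows these signatures are pairwise distinct. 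Hence each near-trinion matches at most one entry of \cite{Kobayashi2021}, and by Theorem~\ref{thm:main} each unital entry is isomorphic to some near-trinion. If the six Kobayashi signatures are likewise pairwise distinct, the matching is a bijection. As a consistency check, I will exhibit an explicit isomorphism for each pair: a change of basis sending the unit, the primitive idempotents, and a radical generator to their counterparts.

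The main obstacle is bookkeeping rather than theory. The entries in \cite{Kobayashi2021} are presented in their own coordinates, often with the identity not appearing as a basis vector. Identifying the unit and the radical therefore takes some linear algebra for each entry. The semisimple case with residue field $\CC$ also needs care: there the radical is zero, but the algebra need not look split in the given basis. To detect it, I will check for an element $u$ in a complement of the $\RR$-idempotent with $u^2=-e$, where $e$ is the complementary idempotent.
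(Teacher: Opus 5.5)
Your proposal is correct and follows essentially the paper's route: the paper also matches each unital entry of \cite{Kobayashi2021} to a near-trinion by its invariant signature ($d$, commutativity, whether $\Jac(A)^2=0$, idempotent count), using that the signatures of NT1--NT6 are pairwise distinct. The paper merely asserts that the matching is ``read off directly,'' whereas you spell out how to certify the radical and how to detect the $\CC$-factor. One small slip: distinctness of the NT signatures shows only that each Kobayashi entry matches at most one near-trinion, not that each near-trinion matches at most one entry. That second fact is exactly what your later check, that the six Kobayashi signatures are distinct, provides.
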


\begin{table}[h]
\caption{The six near-trinions and their corresponding entries in
\cite{Kobayashi2021}. Here $d = \dim_\RR \Jac(A)$ and $J^2 = \Jac(A)^2$;
the final column gives table numbers in Section~4 of \cite{Kobayashi2021}.}
\label{tab:kst}
\renewcommand{\arraystretch}{1.35}
\begin{tabular}{llll}
  \toprule
  Near-trinion & Invariant signature & Label in \cite{Kobayashi2021} & Table (in \cite{Kobayashi2021}) \\
  \midrule
  \textup{(NT1)} $\RR^3$
    & $d=0$;\ 3 idempotents
    & $U^3_0$ & (17) \\
  \textup{(NT2)} $\RR \times \CC$
    & $d=0$;\ 2 idempotents;\ $\CC$-factor
    & $U^3_{2-}{}^{\dagger}$ & (18) \\
  \textup{(NT3)} $\RR \times \RR[\varepsilon]/(\varepsilon^2)$
    & $d=1$;\ $J^2=0$;\ commutative
    & $U^3_1$ & (19) \\
  \textup{(NT4)} $\RR[\varepsilon_1,\varepsilon_2]/(\varepsilon^2_1,\varepsilon^2_2,\varepsilon_1\varepsilon_2)$
    & $d=2$;\ $J^2=0$;\ 1 idempotent
    & $U^3_3$ & (25) \\
  \textup{(NT5)} $\RR[x]/(x^3)$
    & $d=2$;\ $J^2\neq 0$;\ 1 idempotent
    & $U^3_2$ & (20) \\
  \textup{(NT6)} $T_2(\RR)$
    & $d=1$;\ $J^2=0$;\ non-commutative
    & $U^3_4$ & (26) \\
  \bottomrule
\end{tabular}
\smallskip

\noindent\footnotesize{$^\dagger\,U^3_{2-}$ (subscript with minus sign)
denotes the algebra existing only over $\RR$, arising from the real division
algebra $\CC$; it has no analogue over $\CC$ and accounts for the count of
six (over $\RR$) versus five (over $\CC$) unital three-dimensional algebras.
The label $U^3_2$ (no minus sign, row NT5) is a distinct algebra.}
\end{table}

\begin{proof}
  Each near-trinion is uniquely identified by its invariant signature.
  The matching against \cite{Kobayashi2021} is read off directly:
  $U^3_0$ carries three orthogonal idempotents; $U^3_{2-}$ (the minus-subscript
  label, denoting the $\RR$-only algebra) has two idempotents and a $\CC$-factor;
  $U^3_1$ is the commutative split extension with $d=1$;
  $U^3_2$ (no minus, distinct from $U^3_{2-}$) has Loewy length three;
  $U^3_3$ has square-zero two-dimensional radical; and $U^3_4$ is the unique
  non-commutative entry.
\end{proof}

\begin{remark}
  The label $U^3_{2-}$ in \cite{Kobayashi2021} denotes an algebra
  specific to $\RR$: it arises from the real division algebra $\CC$,
  which has no complex analogue.
  This is why \cite{Kobayashi2021} lists five unital algebras over $\CC$
  but six over $\RR$.
\end{remark}

\subsection{Maximal ideals of near-trinions}
\label{sec:maxideals}

\begin{proposition}\label{prop:maxideals}
  The maximal left ideals of the six near-trinions, with $\Jac(A)$
  confirmed in each case as their intersection, are as follows.
\end{proposition}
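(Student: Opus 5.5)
The plan is to go through the six algebras one by one. For each, I write down an explicit basis, list all maximal left ideals, and check that their intersection is the radical predicted by Proposition~\ref{prop:jaceq}. The main tool throughout is this: every maximal left ideal $\mathfrak m$ contains $\Jac(A)$, by characterization $J_1$. So maximal left ideals of $A$ correspond bijectively to maximal left ideals of $A/\Jac(A)$. I will first identify a nilpotent two-sided ideal $N$ with semisimple quotient. By Wedderburn--Artin (Theorem~\ref{thm:wa}), that quotient is a product of simple factors. Then $N=\Jac(A)$, and the maximal left ideals are the preimages of the maximal left ideals of the quotient.

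For the commutative cases I will use the fact that every ideal is two-sided, so maximal left ideals are just maximal ideals. These are the kernels of surjections onto residue fields.
\begin{itemize}
\item \textbf{NT1.} $\RR^3$ has the three ideals $\{x_i=0\}$, whose intersection is $0$.
\item \textbf{NT2.} $\RR\times\CC$ has $0\times\CC$ and $\RR\times 0$, whose intersection is $0$.
\item \textbf{NT3.} $\RR\times\RR[\varepsilon]/(\varepsilon^2)$ has $0\times\RR[\bar\varepsilon]/(\bar\varepsilon^2)$ and $\RR\times(\bar\varepsilon)$. Their intersection is $0\times\RR\bar\varepsilon$, which is square-zero.
\item \textbf{NT4 and NT5.} Each is local, with unique maximal ideal $(\bar\varepsilon_1,\bar\varepsilon_2)$, respectively $(\bar x)$. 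This ideal is nilpotent and its quotient is $\RR$. Locality follows from characterization $J_3$: an element $a+n$ with $a\in\RR^\times$ and $n$ nilpotent is a unit, via the geometric series.
\end{itemize}

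The only genuinely non-commutative case is \textbf{NT6}, $T_2(\RR)$, and I expect it to be the main obstacle. Here maximal left ideals need not be two-sided, and there could be a one-parameter family of them. I take $N=\RR e_{12}$. It is a two-sided ideal with $N^2=0$, and $T_2(\RR)/N\cong\RR\times\RR$ via the diagonal. Hence $\Jac(A)=N$. The maximal left ideals are therefore the preimages of $\RR\times0$ and $0\times\RR$:
\[
\mathfrak m_1=\{a\in T_2(\RR): a_{22}=0\},\qquad \mathfrak m_2=\{a\in T_2(\RR): a_{11}=0\}.
\]
Both are two-dimensional. I will verify directly that each is a left ideal by multiplying a general upper triangular matrix on the left. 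I will also check that there are no others. A proper left ideal $L\not\supseteq N$ meets $N$ trivially, so $L$ maps injectively into $\RR\times\RR$. This situation is excluded by the correspondence argument above, since every maximal left ideal contains $N$. Finally, $\mathfrak m_1\cap\mathfrak m_2=\RR e_{12}$, as required.

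To close, I will tabulate the maximal left ideals together with their intersection. In every case that intersection agrees with $J_2$, the sum of the nilpotent left ideals, which is consistent with Proposition~\ref{prop:jaceq}. The values obtained are $d=\dim\Jac(A)=0,0,1,2,2,1$ for NT1 through NT6.
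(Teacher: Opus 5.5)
Your proof is correct and yields the same list as the paper, but it justifies that list by a different mechanism. The paper identifies the maximal left ideals by inspection. In NT1--NT3 it uses the factor structure, and in NT4--NT6 an explicit invertibility criterion. The reasoning there is that a proper left ideal consists of non-units. In the local cases the non-units form $\Jac(A)$. In $T_2(\RR)$ they form $\mathfrak m_1\cup\mathfrak m_2$, so a proper left ideal, being a subspace, must lie in one of the two; the paper leaves this last step implicit.

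You instead make the containment $\mathfrak m\supseteq\Jac(A)$ the organizing principle. You exhibit a nilpotent two-sided ideal $N$ with semisimple quotient, conclude $N=\Jac(A)$, and pull back the maximal left ideals of $A/N$. This is uniform across all six cases. It also gives an explicit completeness argument for $T_2(\RR)$ that would still work where the non-units are not a finite union of subspaces; for example, $M_2(\RR)$ has a one-parameter family of maximal left ideals. The price is that your closing check $\bigcap_i\mathfrak m_i=\Jac(A)$ becomes automatic. It is just the pullback of $\bigcap=0$ in the semisimple quotient, and you used Proposition~\ref{prop:jaceq} to get there. The paper's computation, by contrast, is meant as an independent illustration of that proposition.

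One phrasing point in NT6: proper left ideals not containing $N$ do exist. For instance, $\RR(e_{11}+te_{12})$ is a left ideal for every $t\in\RR$; this is precisely the one-parameter family you anticipated, but its members are one-dimensional and not maximal. What your argument actually excludes is a \emph{maximal} left ideal not containing $N$. For that, $\mathfrak m\supseteq\Jac(A)$ alone suffices, so the remark about $L$ mapping injectively into $\RR\times\RR$ can be dropped.
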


\begin{proof}
We work through each class in turn.

\smallskip\noindent
\textbf{(NT1)} $A = \RR^3$, basis $\{e_1, e_2, e_3\}$, $e_ie_j = \delta_{ij}e_i$.
Since $\RR^3$ is semisimple and each $e_i$ generates a minimal ideal
$\RR e_i \cong \RR$, the maximal left ideals are the codimension-one
complementary direct sums:
\[
  \mathfrak{m}_1 = \RR e_2 \oplus \RR e_3,\quad
  \mathfrak{m}_2 = \RR e_1 \oplus \RR e_3,\quad
  \mathfrak{m}_3 = \RR e_1 \oplus \RR e_2.
\]
Their intersection is $0 = \Jac(\RR^3)$.

\smallskip\noindent
\textbf{(NT2)} $A = \RR \times \CC$.
The two minimal direct summands are $\RR \times 0$ and $0 \times \CC$.
The maximal left ideals are their complements:
$\mathfrak{m}_1 = 0 \times \CC$ and $\mathfrak{m}_2 = \RR \times 0$.
Their intersection is $0 = \Jac(\RR \times \CC)$.

\smallskip\noindent
\textbf{(NT3)} $A = \RR \times \RR[\varepsilon]/(\varepsilon^2)$.
The first factor contributes the maximal ideal
$\mathfrak{m}_1 = 0 \times \RR[\varepsilon]/(\varepsilon^2)$.
The second factor $\RR[\varepsilon]/(\varepsilon^2)$ is local with
unique maximal ideal $(\bar\varepsilon)$, contributing
$\mathfrak{m}_2 = \RR \times (\bar\varepsilon)$.
Their intersection is
$\mathfrak{m}_1 \cap \mathfrak{m}_2
 = \{(0, a\bar\varepsilon) : a \in \RR\}
 = 0 \times (\bar\varepsilon) = \Jac(A)$,
one-dimensional, consistent with $d = 1$.

\smallskip\noindent
\textbf{(NT4)} $A = \RR[\varepsilon_1, \varepsilon_2]/(\varepsilon_1^2, \varepsilon_2^2, \varepsilon_1 \varepsilon_2)$.
This algebra is local: an element $a + b\varepsilon_1 + c\varepsilon_2$ is invertible
if and only if $a \neq 0$, with inverse
$a^{-1} - a^{-2}b\varepsilon_1 - a^{-2}c\varepsilon_2$ (using $\varepsilon_a^2 = \varepsilon_1 \varepsilon_2 = 0$).
The unique maximal left ideal is
$\mathfrak{m} = \RR \varepsilon_1 \oplus \RR \varepsilon_2 = \Jac(A)$,
two-dimensional, consistent with $d = 2$.

\smallskip\noindent
\textbf{(NT5)} $A = \RR[x]/(x^3)$.
An element $a_0 + a_1\bar x + a_2\bar x^2$ is invertible iff $a_0 \neq 0$.
The unique maximal ideal is
$\mathfrak{m} = \RR\bar x \oplus \RR\bar x^2 = \Jac(A)$,
two-dimensional with $\Jac(A)^2 = \RR\bar x^2 \neq 0$, so $\LL(A) = 3$.

\smallskip\noindent
\textbf{(NT6)} $A = T_2(\RR)$, basis $\{e_{11}, e_{22}, e_{12}\}$.
An element $ae_{11} + be_{22} + ce_{12}$ is invertible iff $a \neq 0$
and $b \neq 0$.
The two maximal left ideals are:
\[
  \mathfrak{m}_1 = \RR e_{22} \oplus \RR e_{12}
  \quad\text{and}\quad
  \mathfrak{m}_2 = \RR e_{11} \oplus \RR e_{12}.
\]
Their intersection is
$\mathfrak{m}_1 \cap \mathfrak{m}_2 = \RR e_{12} = \Jac(A)$,
one-dimensional, consistent with $d = 1$.
\end{proof}

\begin{table}[h]
\caption{Maximal left ideals of the six near-trinions. In each case the
intersection $\bigcap_i \mathfrak{m}_i = \Jac(A)$, confirming
Proposition~\ref{prop:jaceq}. Overbars denote cosets as in Conventions;
$\varepsilon_a^2$ abbreviates $\varepsilon_1^2$ and $\varepsilon_2^2$
(index $a \in \{1,2\}$).}
\label{tab:maxideals}
\renewcommand{\arraystretch}{1.35}
\begin{tabular}{lccll}
  \toprule
  Algebra & $d$ & Number of maximal left ideals &
  Intersection $\bigcap_i \mathfrak{m}_i$ & Equals $\Jac(A)$\\
  \midrule
  \textup{(NT1)} $\RR^3$   & 0 & 3 & $0$                              & Yes \\
  \textup{(NT2)} $\RR\times\CC$  & 0 & 2 & $0$                        & Yes \\
  \textup{(NT3)} $\RR \times \RR[\varepsilon]/(\varepsilon^2)$
                           & 1 & 2 & $\{0\}\times\RR\bar\varepsilon$   & Yes \\
  \textup{(NT4)} $\RR[\varepsilon_1,\varepsilon_2]/(\varepsilon_a^2,\varepsilon_1\varepsilon_2)$
                           & 2 & 1 & $\RR \varepsilon_1\oplus\RR \varepsilon_2$            & Yes \\
  \textup{(NT5)} $\RR[x]/(x^3)$
                           & 2 & 1 & $\RR\bar x\oplus\RR\bar x^2$      & Yes \\
  \textup{(NT6)} $T_2(\RR)$ & 1 & 2 & $\RR e_{12}$                    & Yes \\
  \bottomrule
\end{tabular}
\end{table}

\begin{remark}
  The local near-trinions --- (NT4) and (NT5) --- are the only ones with
  a single maximal left ideal; all others have two or three.
  Among the non-local algebras with $d \geq 1$, both (NT3) and (NT6)
  have two maximal left ideals, but (NT3) is commutative and (NT6) is not.
\end{remark}

\subsection{The Nakayama Lemma for near-trinions}
\label{sec:nakayama}

\begin{theorem}[Nakayama's Lemma {\cite{Pierce1982}}]
  \label{thm:nakayama}
  Let $A$ be a finite-dimensional algebra and $M$ a finitely generated
  left $A$-module.
  If $\Jac(A) \cdot M = M$, then $M = 0$.
\end{theorem}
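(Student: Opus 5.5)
The plan is to use the radical $J = \Jac(A)$, which Proposition~\ref{prop:jaceq} identifies as a nilpotent two-sided ideal of the finite-dimensional algebra $A$. Let $M$ be a finitely generated left $A$-module with $JM = M$. First I would iterate the hypothesis: from $M = JM$ we get $M = J(JM) = J^2M$, and by induction $M = J^nM$ for every $n \geq 1$. Here $J^n$ means the span of $n$-fold products of elements of $J$. The inclusion $J(J^{n-1}M) \subseteq J^nM$ is immediate from associativity of the module action, and the reverse inclusion is trivial.

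Second, I would invoke nilpotency. Since $J$ is nilpotent, there is an $N$ with $J^N = 0$; in the paper's notation this $N$ is the Loewy length $\LL(A)$, and it is finite because $A$ is finite-dimensional. Then $M = J^NM = 0 \cdot M = 0$. Notice that this argument does not even need $M$ to be finitely generated; that hypothesis is only essential in the general-ring version of the lemma.

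For robustness I would also mention the classical determinant-free argument using $J_3$. Suppose $M \neq 0$ and choose a minimal generating set $m_1,\dots,m_k$. Since $m_k \in JM$, we can write $m_k = \sum_i a_i m_i$ with $a_i \in J$. Then $(1-a_k)m_k = \sum_{i<k} a_i m_i$, and $1 - a_k$ is invertible by the $J_3$ characterization with $b = 1$. This expresses $m_k$ in terms of $m_1,\dots,m_{k-1}$, contradicting minimality; when $k = 1$ it gives $m_1 = 0$.

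The only step needing care is the nilpotency of $J$, that is, knowing $J^N = 0$ and not merely that each element of $J$ is nilpotent. This is exactly the content of Proposition~\ref{prop:jaceq}, which identifies $\Jac(A)$ with the largest nilpotent two-sided ideal, so I would cite it directly. If a self-contained justification were wanted, I would use the descending chain $J \supseteq J^2 \supseteq \cdots$ of subspaces of the finite-dimensional space $A$. This chain stabilizes at some $I = J^N$ with $I = JI$, and Nakayama's argument via $J_3$ applied to the finitely generated module $I$ forces $I = 0$.
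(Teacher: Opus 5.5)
Your proof is correct. The paper gives no proof of Theorem~\ref{thm:nakayama}; it quotes the lemma from the literature, so there is no in-paper argument to match, and the useful comparison is between your two routes.

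The nilpotency route is the natural one in this setting. Once Proposition~\ref{prop:jaceq} supplies $\Jac(A)^N=0$, the identity $M=\Jac(A)^nM$ finishes the proof immediately. As you note, it also shows that finite generation is superfluous for finite-dimensional $A$. Its price is that it rests entirely on the nilpotency clause of Proposition~\ref{prop:jaceq}, which the paper also only cites.

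The minimal-generating-set route is the classical proof valid over any ring. It genuinely needs finite generation, but it uses only two facts: $1-a$ is a unit for $a\in\Jac(A)$, and $\Jac(A)$ is a two-sided ideal. The second fact is what lets you rewrite an element $\sum_j x_j n_j$ of $\Jac(A)M$ as $\sum_i a_i m_i$ with every $a_i\in\Jac(A)$, since you need $x_j c\in\Jac(A)$ for $c\in A$. You should state this explicitly.

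Your self-contained derivation of nilpotency is not circular. You stabilize $J\supseteq J^2\supseteq\cdots$ at $I=J^N$, observe $JI=J^{N+1}=I$, and apply the $J_3$ argument to the finite-dimensional module $I$; that argument never uses nilpotency. The two pieces together therefore give a complete proof resting only on the $J_3$ description and the two-sidedness of the radical.
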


For a near-trinion $A$, set $d = \dim_\RR \Jac(A)$.
Since $\dim_\RR A = 3$ we have $d \in \{0,1,2,3\}$ a priori, but $d = 3$
is ruled out: if $\Jac(A)$ were three-dimensional it would equal $A$,
making $A$ nilpotent.
But $1 \in A$ is not nilpotent, a contradiction, so $1 \notin \Jac(A)$, forcing $d \leq 2$.
The semisimple quotient $A/\Jac(A)$ has dimension $3 - d$, and the three
strata are:

\begin{itemize}
  \item $d = 0$: $A$ semisimple; Wedderburn--Artin applies directly,
    yielding (NT1) and (NT2).

  \item $d = 1$: $A/\Jac(A)$ has dimension two.
    Two-dimensional semisimple real algebras are either $\RR^2$ or $\CC$.
    The subcase $A/\Jac(A) \cong \CC$ is impossible in dimension three
    (shown in Section~\ref{sec:d1}).
    The subcase $A/\Jac(A) \cong \RR^2$ yields two distinct near-trinions,
    (NT3) and (NT6), distinguished by whether the radical generator
    lies in a diagonal or off-diagonal Peirce corner.

  \item $d = 2$: $A/\Jac(A) \cong \RR$; $A$ is local with two-dimensional
    nil radical; whether $\Jac(A)^2 = 0$ distinguishes (NT4) from (NT5).
\end{itemize}

\begin{center}
\renewcommand{\arraystretch}{1.3}
\begin{tabular}{ccc}
  \toprule
  $d = \dim_\RR \Jac(A)$ & $\dim_\RR A/\Jac(A)$ & Candidates \\
  \midrule
  $0$ & $3$ & (NT1), (NT2) \\
  $1$ & $2$ & (NT3), (NT6) \\
  $2$ & $1$ & (NT4), (NT5) \\
  \bottomrule
\end{tabular}
\end{center}

\section{The Classification}\label{sec:classification}

The three strata $d \in \{0,1,2\}$ partition the problem exhaustively
(Section~\ref{sec:prelim}), and within each stratum the analysis below is
complete: the $d=0$ stratum is settled by a single dimension count;
the $d=1$ stratum has two Peirce-corner sub-cases (diagonal and off-diagonal)
plus one impossibility argument; the $d=2$ stratum splits on whether
$\Jac(A)^2=0$.

\subsection{The semisimple case (\texorpdfstring{$d=0$}{d=0})}

When $\Jac(A) = 0$, Theorem~\ref{thm:wa} applies directly.
The dimension constraint reads: $\dim_\RR M_n(D) = n^2 \dim_\RR D$, so
the only ways to write $3$ as such a sum are $3 = 1+1+1$ and $3 = 1+2$.
The first gives $\RR \times \RR \times \RR$; the second gives $\RR \times \CC$.
No other combination works: $\dim \HH = 4$ and $\dim M_2(\RR) = 4$
already exceed three.

\begin{proposition}\label{prop:ss}
  The semisimple near-trinions are, up to isomorphism,
  \textup{(NT1)} $\RR^3$ and \textup{(NT2)} $\RR \times \CC$.
\end{proposition}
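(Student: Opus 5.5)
The plan is to apply Theorem~\ref{thm:wa} to a semisimple near-trinion $A$ and reduce everything to a dimension count. Write
\[
  A \cong M_{n_1}(D_1)\times\cdots\times M_{n_r}(D_r),
\]
where by Theorem~\ref{thm:frobenius} each $D_i\in\{\RR,\CC,\HH\}$. Taking real dimensions gives
\[
  3=\sum_{i=1}^r n_i^2\dim_\RR D_i .
\]
Each summand is a positive integer of the form $n^2\cdot\delta$ with $\delta\in\{1,2,4\}$. So any factor with $n_i\ge 2$ or $D_i=\HH$ contributes at least $4>3$ and is excluded. Every factor is therefore $\RR$ (dimension $1$) or $\CC$ (dimension $2$). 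The compositions of $3$ into parts from $\{1,2\}$ are, up to reordering of factors, $1+1+1$ and $1+2$. These give $A\cong\RR^3$ or $A\cong\RR\times\CC$; reordering factors of a direct product does not change the isomorphism class.

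Conversely, I will check that both algebras are semisimple and three-dimensional, so that they actually occur in the stratum $d=0$. This is already recorded in Proposition~\ref{prop:maxideals}: the maximal left ideals intersect in $0$, so $\Jac=0$ by Proposition~\ref{prop:jaceq}. Alternatively, any finite product of division algebras has zero radical, since each factor has no nonzero nilpotent ideals.

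Finally, I need that NT1 and NT2 are not isomorphic, although this is strictly part of irredundancy in Section~\ref{sec:invariants}. The quickest invariant is commutative idempotents. In $\RR^3$ every element of $\{0,1\}^3$ is idempotent, giving $8$ idempotents. In $\RR\times\CC$ the idempotents are the pairs $(a,z)$ with $a\in\{0,1\}$ and $z\in\{0,1\}$, since $\CC$ is a field, giving $4$.

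An alternative invariant is the existence of $x$ with $x^2=-1$. Such an element exists in $\RR\times\CC$, for instance $(\ )$ no: it fails in the $\RR$ factor. So instead use $x^2=-e$ for a nonzero idempotent $e$: the element $(0,i)$ satisfies $(0,i)^2=-(0,1)$, whereas in $\RR^3$ squares have nonnegative coordinates. Either invariant separates the two algebras.

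There is no real obstacle here. The only point needing care is the justification that Wedderburn--Artin's division algebras are finite-dimensional over $\RR$, so that Frobenius applies. This holds because each $D_i$ embeds in $A$, as $D_i$ sits inside the corner $e_{11}M_{n_i}(D_i)e_{11}$.
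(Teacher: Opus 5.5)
Your argument is correct and is the paper's own: Wedderburn--Artin plus Frobenius, then the count $3=\sum_i n_i^2\dim_\RR D_i$ excludes $\HH$ and $n_i\ge 2$ and leaves only $1+1+1$ and $1+2$. The converse and the NT1/NT2 separation you add are handled elsewhere in the paper (Proposition~\ref{prop:maxideals} and Section~\ref{sec:invariants}), and your count of $8$ versus $4$ idempotents is the paper's primitive-idempotent count $3$ versus $2$ in another form; before the valid $x^2=-e$ argument you should delete the aborted $x^2=-1$ attempt.
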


\subsection{One-dimensional radical (\texorpdfstring{$d=1$}{d=1})}
\label{sec:d1}
Here $A/\Jac(A)$ is a two-dimensional semisimple $\RR$-algebra, 
hence isomorphic to either $\RR^2$ or $\CC$. The subcase 
$A/\Jac(A) \cong \RR^2$ produces two non-isomorphic near-trinions, 
determined by whether the radical generator occupies a diagonal or 
off-diagonal Peirce corner; the subcase $A/\Jac(A) \cong \CC$ is impossible over $\RR$,
as the argument below establishes.

\medskip
\noindent\textbf{Subcase:} $A/\Jac(A) \cong \RR^2$.
Choose a basis $\{e_1, e_2, j\}$ for $A$ where $e_1, e_2$ are
orthogonal idempotents summing to $1$ and $j$ generates $\Jac(A)$.
Nilpotency forces $j^2 = 0$.
(Here $j$ is a name for the radical generator in the $d=1$ analysis;
it is unrelated to the basis elements $\varepsilon_1, \varepsilon_2$ used for NT4 in
Section~\ref{sec:classification}.)
The structure is determined by how $j$ sits in the Peirce decomposition
(the decomposition of $A$ into corner subalgebras $e_i A e_k$ determined
by the idempotents $e_1, e_2$; see Section~\ref{sec:prelim} and
Conventions)
with respect to $e_1$ and $e_2$: since $j$ is a non-unit, $j$ belongs
to some corner $e_i A e_k$, meaning $e_i \cdot j = j$ and $j \cdot e_k = j$.
There are two essentially distinct cases.

\medskip
\noindent\textbf{Case (a): diagonal placement} ($i = k$).
Without loss of generality $j \in e_1 A e_1$, so $e_1 j = j e_1 = j$
and $e_2 j = j e_2 = 0$.
Here $j$ commutes with both idempotents, and the algebra decomposes as
$A \cong (e_1 A e_1) \times (e_2 A e_2)$.
Since $e_1 A e_1 \cong \RR[\varepsilon]/(\varepsilon^2)$ and
$e_2 A e_2 \cong \RR$, we obtain $A \cong \RR \times \RR[\varepsilon]/(\varepsilon^2)$.
This algebra is commutative.
\[
  \boxed{A \;\cong\; \RR \times \RR[\varepsilon]/(\varepsilon^2). \tag{NT3}}
\]
Hence, Case~(a) yields exactly \textup{(NT3)}.

\medskip
\noindent\textbf{Case (b): off-diagonal placement} ($i \neq k$).
Without loss of generality $j \in e_1 A e_2$, so
$e_1 j = j$, $j e_2 = j$, while $e_2 j = 0$ and $j e_1 = 0$.
The multiplication table on $\{e_1, e_2, j\}$ is:
\[
  e_1^2 = e_1,\quad e_2^2 = e_2,\quad e_1 e_2 = 0,\quad
  e_1 j = j,\quad j e_2 = j,\quad
  e_2 j = j e_1 = j^2 = 0.
\]
Under the identification $e_1 = e_{11}$, $e_2 = e_{22}$, $j = e_{12}$,
this is exactly $T_2(\RR)$.
The algebra is \emph{non-commutative}: $e_1 j = j \neq 0 = j e_1$.
\[
  \boxed{A \;\cong\; T_2(\RR). \tag{NT6}}
\]
Hence, Case~(b) yields exactly \textup{(NT6)}.

The two cases produce non-isomorphic algebras: (NT3) is commutative and
(NT6) is not.
Thus the subcase $A/\Jac(A) \cong \RR^2$ yields exactly the two near-trinions
(NT3) and (NT6).

\medskip
\noindent\textbf{Subcase:} $A/\Jac(A) \cong \CC$: \textbf{impossible in dimension three.}

If $A/\Jac(A) \cong \CC$ and $\Jac(A) = \RR\eta$ (one-dimensional), then
$\Jac(A)$ is a two-sided ideal, so $i \cdot \eta \in \Jac(A) = \RR\eta$,
giving $i \cdot \eta = c\eta$ for some $c \in \RR$.
But then $i^2 \cdot \eta = c^2\eta$, and since $i^2 = -1$ in $\CC$ we
need $c^2 = -1$, which has no real solution.
Hence, the subcase $A/\Jac(A) \cong \CC$ is impossible, and the $d = 1$
case yields exactly (NT3) and (NT6).

\subsection{Two-dimensional radical (\texorpdfstring{$d=2$}{d=2})}

Here $A/\Jac(A) \cong \RR$ and $A$ is local with two-dimensional nil
radical $J = \Jac(A)$.
The product $J^2$ is either zero or one-dimensional.

\medskip
\noindent\textbf{Subcase:} $J^2 = 0$.
Taking a basis $\{1, \varepsilon_1, \varepsilon_2\}$ with $\varepsilon_a \varepsilon_b = 0$ for all $a, b$:
\[
  \boxed{A \;\cong\;
  \RR[\varepsilon_1, \varepsilon_2]\big/(\varepsilon_1^2,\, \varepsilon_2^2,\, \varepsilon_1 \varepsilon_2). \tag{NT4}}
\]
The isomorphism class is independent of the choice of basis for $J$:
any two bases are related by an invertible linear map $\phi\colon J\to J$.
Since $J^2=0$, every product of two elements of $J$ is zero, so any invertible
linear change of basis $J\to J$ fixes all products and hence extends (with
$1\mapsto 1$) to an $\RR$-algebra automorphism of $A$.

Hence, the subcase $J^2=0$ yields exactly \textup{(NT4)}.

\medskip
\noindent\textbf{Subcase:} $J^2 \neq 0$, so $\dim_\RR J^2 = 1$.
Let $x \in J \setminus J^2$.
Since $x \notin J^2 = \RR x^2$ and $\dim_\RR J = 2$, the elements
$x$ and $x^2$ are linearly independent, so $\{x, x^2\}$ is a basis for $J$.
Since $J^3 \subseteq J \cdot J^2 = \RR x \cdot \RR x^2 = \RR x^3$,
and nilpotency of $J$ forces $J^N = 0$ for some $N$, we conclude
$J^3 = 0$ and hence $x^3 = 0$.
Thus
\[
  \boxed{A \;\cong\; \RR[x]/(x^3). \tag{NT5}}
\]
Any other generator $x' = ax + bx^2$ with $a \neq 0$ gives the same
isomorphism class. Indeed, the assignment $x \mapsto x'$ extends uniquely to an $\RR$-algebra
automorphism of $\RR[x]/(x^3)$, and $(x')^3 = 0$ since $x^3 = 0$. 
Hence, the subcase $J^2\neq 0$ yields exactly \textup{(NT5)}.

\subsection{Multiplication tables}
\label{sec:multtables}

We now record, for each near-trinion, a concrete multiplication table
on a chosen $\RR$-basis $\{v_1, v_2, v_3\}$.  Off-table products are
determined by $\RR$-bilinearity and the unit; entries $v_i \cdot v_j$
are read from row $i$, column $j$.  

\medskip\noindent
\textbf{(NT1)} $A = \RR^3$, basis $\{e_1, e_2, e_3\}$:
$e_i e_j = \delta_{ij} e_i$.

\medskip\noindent
\textbf{(NT2)} $A = \RR \times \CC$,
basis $\{f,\, \mathbf{1}_\CC,\, \mathbf{i}\}$
where $f = (1,0)$, $\mathbf{1}_\CC = (0,1)$, $\mathbf{i} = (0,i)$:

\begin{center}
\renewcommand{\arraystretch}{1.2}
\begin{tabular}{c|ccc}
  $\cdot$ & $f$ & $\mathbf{1}_\CC$ & $\mathbf{i}$ \\\hline
  $f$             & $f$              & $0$               & $0$ \\
  $\mathbf{1}_\CC$& $0$              & $\mathbf{1}_\CC$  & $\mathbf{i}$ \\
  $\mathbf{i}$    & $0$              & $\mathbf{i}$      & $-\mathbf{1}_\CC$ \\
\end{tabular}
\end{center}

\medskip\noindent
\textbf{(NT3)} $A = \RR \times \RR[\varepsilon]/(\varepsilon^2)$,
basis $\{e_1, e_2, j\}$ with $e_1 = (1,0)$, $e_2 = (0,1)$,
$j = (0,\bar\varepsilon)$:

\begin{center}
\renewcommand{\arraystretch}{1.2}
\begin{tabular}{c|ccc}
  $\cdot$ & $e_1$ & $e_2$ & $j$ \\\hline
  $e_1$ & $e_1$ & $0$   & $0$ \\
  $e_2$ & $0$   & $e_2$ & $j$ \\
  $j$   & $0$   & $j$   & $0$ \\
\end{tabular}
\end{center}
Here $j = e_2 j = j e_2$ and $j^2 = 0$;
$j$ lies in the diagonal corner $e_2 A e_2$, confirming Case (a) above.

\medskip\noindent
\textbf{(NT4)} $A = \RR[\varepsilon_1,\varepsilon_2]/(\varepsilon_1^2, \varepsilon_2^2, \varepsilon_1 \varepsilon_2)$,
basis $\{1, \varepsilon_1, \varepsilon_2\}$:

\begin{center}
\renewcommand{\arraystretch}{1.2}
\begin{tabular}{c|ccc}
  $\cdot$ & $1$ & $\varepsilon_1$ & $\varepsilon_2$ \\\hline
  $1$          & $1$          & $\varepsilon_1$ & $\varepsilon_2$ \\
  $\varepsilon_1$ & $\varepsilon_1$ & $0$ & $0$ \\
  $\varepsilon_2$ & $\varepsilon_2$ & $0$ & $0$ \\
\end{tabular}
\end{center}

\medskip\noindent
\textbf{(NT5)} $A = \RR[x]/(x^3)$, basis $\{1, \bar x, \bar x^2\}$:

\begin{center}
\renewcommand{\arraystretch}{1.2}
\begin{tabular}{c|ccc}
  $\cdot$ & $1$ & $\bar x$ & $\bar x^2$ \\\hline
  $1$         & $1$        & $\bar x$   & $\bar x^2$ \\
  $\bar x$    & $\bar x$   & $\bar x^2$ & $0$ \\
  $\bar x^2$  & $\bar x^2$ & $0$        & $0$ \\
\end{tabular}
\end{center}
The radical satisfies $\Jac(A)^2 = \RR\bar x^2 \neq 0$,
distinguishing (NT5) from (NT4).

\medskip\noindent
\textbf{(NT6)} $A = T_2(\RR)$,
basis $\{e_{11}, e_{22}, e_{12}\}$ with $e_{11} + e_{22} = 1_A$:

\begin{center}
\renewcommand{\arraystretch}{1.2}
\begin{tabular}{c|ccc}
  $\cdot$ & $e_{11}$ & $e_{22}$ & $e_{12}$ \\\hline
  $e_{11}$ & $e_{11}$ & $0$      & $e_{12}$ \\
  $e_{22}$ & $0$      & $e_{22}$ & $0$ \\
  $e_{12}$ & $0$      & $e_{12}$ & $0$ \\
\end{tabular}
\end{center}
Note $e_{11} \cdot e_{12} = e_{12}$ while $e_{12} \cdot e_{11} = 0$:
this is the unique non-commutative near-trinion.
The radical generator $e_{12}$ lies in the off-diagonal corner
$e_{11}A e_{22}$, confirming Case (b) above.
The (NT6) table is the unique one not symmetric under transposition.

\begin{lemma}\label{lem:complete}
  Every near-trinion falls into exactly one of the cases analysed above,
  and is therefore isomorphic to exactly one of \textup{(NT1)}--\textup{(NT6)}.
\end{lemma}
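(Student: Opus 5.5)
The plan is to show exhaustiveness and exclusivity separately. Exhaustiveness follows the stratification of Section~\ref{sec:nakayama}; at each branch point I will check that the case split is genuinely exhaustive, not just illustrated. Exclusivity comes from the invariants already attached to each class.

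\emph{Exhaustiveness.} Let $A$ be a near-trinion and set $d=\dim_\RR\Jac(A)$. The remark after Theorem~\ref{thm:nakayama} gives $d\in\{0,1,2\}$, so every $A$ lies in exactly one stratum.

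For $d=0$, Theorem~\ref{thm:wa} together with the dimension count of Proposition~\ref{prop:ss} gives (NT1) or (NT2).

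For $d=2$, the quotient $A/\Jac(A)$ is one-dimensional semisimple, hence $\RR$. Thus $A=\RR 1\oplus J$ with $J=\Jac(A)$ nilpotent. Nakayama (Theorem~\ref{thm:nakayama}, applied to $M=J$) gives $J^2\subsetneq J$, so $\dim J^2\in\{0,1\}$. The two subcases treated above then give (NT4) and (NT5). In the second subcase I will also note that $x\in J\setminus J^2$ generates, so $A=\RR[x]/(x^3)$.

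For $d=1$, the quotient is a two-dimensional semisimple algebra, so by Theorem~\ref{thm:wa} it is $\RR^2$ or $\CC$. The $\CC$ case is excluded by the sign argument. A cleaner phrasing: $\Jac(A)=\RR\eta$ is a left module over $A/\Jac(A)$ because $\Jac(A)^2=0$. It is therefore a $\CC$-vector space of real dimension one, which is impossible.

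\emph{The main obstacle.} The step I expect to need real care is the $\RR^2$ subcase. The text there presupposes two things without justification: that orthogonal idempotents $e_1,e_2$ with $e_1+e_2=1$ exist in $A$, and that the radical generator $j$ lies in a single Peirce corner.

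For the first, I will lift idempotents modulo the nilpotent ideal $\Jac(A)$. Concretely, take any preimage $a$ of $(1,0)$; then $a^2-a\in\Jac(A)$, so $(a^2-a)^2=0$. An explicit polynomial correction of the form $e=a+t(1-2a)$ then gives an idempotent lift. Set $e_1=e$ and $e_2=1-e$.

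For the second, the Peirce decomposition gives $\Jac(A)=\bigoplus_{i,k}e_i\Jac(A)e_k$, since $\Jac(A)$ is a two-sided ideal. Because this space is one-dimensional, exactly one summand is nonzero, and it contains $j$. This justifies Cases (a) and (b), with the ``without loss of generality'' obtained by swapping $e_1$ and $e_2$. In Case (a) I also need $e_1Ae_1=\RR e_1\oplus\RR j$ and $e_2Ae_2=\RR e_2$, with no other corners. This holds because $A=\RR e_1\oplus\RR e_2\oplus\RR j$ as vector spaces.

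\emph{Exclusivity.} Radical dimension, commutativity, and $\Jac(A)^2$ are isomorphism invariants. They are computed for each class in Proposition~\ref{prop:maxideals} and the multiplication tables. These invariants separate all six classes except NT1 from NT2, and those two are separated by $\RR^3\not\cong\RR\times\CC$ (for instance, only the second contains a square root of $-1$ in a corner). Hence each near-trinion matches exactly one of (NT1)--(NT6).
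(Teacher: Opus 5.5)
Your proposal is correct and follows the paper's route: stratify by $d=\dim_\RR\Jac(A)$, settle $d=0$ by Wedderburn--Artin, split $d=1$ by the Peirce placement of the radical generator after excluding $\CC$, split $d=2$ on whether $J^2=0$, and obtain irredundancy from invariants. What you add is rigor the paper omits, namely the idempotent lift $e=a+(a^2-a)(1-2a)$ and the decomposition $\Jac(A)=\bigoplus_{i,k}e_i\Jac(A)e_k$ that forces $j$ into a single corner. The paper also asserts $J^2=\RR x^2$ without proof, and you leave $x^2\neq 0$ for $x\in J\setminus J^2$ unargued; applying Nakayama to $M=J^2$ gives $J^3=0$, whence $J^2=\RR x^2$.
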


\begin{proof}
  The radical stratification partitions the problem into $d \in \{0,1,2\}$.
  Within each stratum the analysis is exhaustive: the semisimple case is
  settled by Proposition~\ref{prop:ss}; the $d = 1$ case branches on
  Peirce corner placement, yielding (NT3) and (NT6) (the subcase
  $A/\Jac(A) \cong \CC$ being impossible as shown); the $d = 2$ case
  branches on whether $J^2$ vanishes, yielding (NT4) and (NT5).
  No other branches exist.
  Irredundancy is the content of Section~\ref{sec:invariants}.
\end{proof}

\section{Invariants and Irredundancy}\label{sec:invariants}

The six near-trinions are distinguished by four invariants:
$d = \dim_\RR \Jac(A)$; whether $\Jac(A)^2 = 0$; commutativity; and the
number of primitive orthogonal idempotents.
Table~\ref{tab:invariants} below records these together with center
dimension and Loewy length.
For the five commutative near-trinions, $Z(A) = A$ so $\dim Z(A) = 3$;
for $T_2(\RR)$ the center consists only of scalar matrices,
giving $\dim Z(A) = 1$.

\begin{table}[h]
\caption{Distinguishing invariants of the six near-trinions.
Abbreviations: $d = \dim_\RR \Jac(A)$; N/A\,=\,not applicable
(radical already zero); Commut.\,=\,Commutative?;
$\LL(A)$\,=\,Loewy length.
Rows ordered by $d$; see Remark~\ref{rem:separation}.}
\label{tab:invariants}
\renewcommand{\arraystretch}{1.35}
\begin{tabular}{lcccccc}
  \toprule
  Algebra & $d$ & $\Jac(A)^2{=}0$? & Commut.? & Idempotents & $\dim Z$ & $\LL(A)$  \\
  \midrule
  \textup{(NT1)} $\RR^3$
    & 0 & N/A & Yes & 3 & 3 & 1 \\
  \textup{(NT2)} $\RR \times \CC$
    & 0 & N/A & Yes & 2 & 3 & 1 \\
  \textup{(NT3)} $\RR \times \RR[\varepsilon]/(\varepsilon^2)$
    & 1 & Yes & Yes & 2 & 3 & 2 \\
  \textup{(NT6)} $T_2(\RR)$
    & 1 & Yes & No  & 2 & 1 & 2 \\
  \textup{(NT4)} $\RR[\varepsilon_1,\varepsilon_2]/(\varepsilon_a^2, \varepsilon_1 \varepsilon_2)$
    & 2 & Yes & Yes & 1 & 3 & 2 \\
  \textup{(NT5)} $\RR[x]/(x^3)$
    & 2 & No  & Yes & 1 & 3 & 3 \\
  \bottomrule
\end{tabular}
\end{table}

\begin{remark}[Separation of every pair]\label{rem:separation}
The six near-trinions are separated by the following module-theoretic invariants of Table~\ref{tab:invariants}. The invariant $d$ alone separates into three strata; within $d=0$, the idempotent count distinguishes NT1 (three primitive idempotents) from NT2 (which has two); within $d=1$, commutativity is the sole separator;
      NT3 is commutative, while NT6 is not; all other invariants
      ($\Jac(A)^2$, $\LL(A)$, idempotent count, $d$) are equal for the two; within $d=2$, Loewy length distinguishes NT4
      ($\LL = 2$, $\Jac(A)^2 = 0$) from NT5 ($\LL = 3$, $\Jac(A)^2 \neq 0$).
\end{remark}

\section{Directions for Further Work}\label{sec:future}

The classification is specific to finite-dimensional associative unital
$\RR$-algebras of dimension three, and three features of this setting
are essential. Frobenius' theorem restricts the Wedderburn--Artin
summands over $\RR$ to matrix rings over $\RR$, $\CC$, or $\HH$; over
$\CC$ or a finite field the decomposition differs. The effect is already
visible in dimension three: $\RR\times\CC$ has no analogue over $\CC$
since $\CC$ does not split as a non-trivial product of $\CC$-algebras,
and accordingly \cite{Kobayashi2021} lists five unital algebras over
$\CC$ versus six over $\RR$. Finite-dimensionality underlies both the
Loewy-series bound $d\leq 2$ and Nakayama's Lemma; neither holds in
infinite dimensions. The Jacobson radical and Wedderburn--Artin theorem
presuppose associativity; Lie, Jordan, and octonion algebras require
different methods \cite{Schafer1966,McCrimmon2004,Baez2002}.

The next problem is the classification over finite fields.
Over $\FF_q$, Wedderburn's little theorem replaces Frobenius' theorem
in the semisimple stratum, and the cubic extension $\FF_{q^3}$ enters
as a semisimple algebra with no real counterpart.
The sign obstruction used in the $d=1$ stratum --- the equation
$c^2 = -1$ having no real solution --- is genuinely specific
to $\RR$: over $\FF_q$ with $q \equiv 1 \pmod{4}$ the equation is
solvable, and the obstruction, if there is one, must be found elsewhere.

\section*{Acknowledgements}

The author thanks the organizers of the AMS Special Session on Quaternions
at the January 2024 Joint Mathematics Meetings in San Francisco, at which
the trinion presentation \cite{Shelton2024} was given; their questions,
particularly regarding what occupies dimension three in the absence of a
division algebra, are what set this work in motion.

\end{document}